\newtheorem{theorem}{Theorem}
\newtheorem{lemma}[theorem]{Lemma}
\newtheorem{proposition}[theorem]{Proposition}
\newtheorem{corollary}[theorem]{Corollary}
\theoremstyle{definition}
\newtheorem{example}[theorem]{Example}
\newcommand{\NN}{\mathbb{N}}
\newcommand{\ZZ}{\mathbb{Z}}
\newcommand{\QQ}{\mathbb{Q}}
\newcommand{\dual}[1]{{#1}^{\star}}
\newcommand{\sym}{\mathcal{S}}
\newcommand{\eps}{\epsilon}
\newcommand{\coeff}[2]{\langle #1,#2 \rangle}
\renewcommand{\o}{\mathfrak{o}}
\newcommand{\val}[1]{\llbracket #1 \rrbracket}
\newcommand*{\Cdot}{\raisebox{-0.4ex}{\scalebox{1.2}{$\cdot$}}}
\DeclareMathOperator{\Int}{\mathrm{Int}}
\DeclareMathOperator{\xor}{\text{\sc xor}}
\DeclareMathOperator{\band}{\text{\sc and}}
\DeclareMathOperator{\bor}{\text{\sc or}}
\title{Subword counting and the incidence algebra}
\author{Anders Claesson}
\address{Department of Computer and Information Sciences,
    University of Strathclyde, Glasgow, UK}
\date{\today}
\subjclass[2010]{Primary: 05A05, 68R15. Secondary: 05A15, 05E15.}
\keywords{Pascal matrix, binomial coefficient, incidence algebra,
  permutation pattern, poset, subword order, reciprocity}
\begin{document}

\begin{abstract}
  The Pascal matrix, $P$, is an upper diagonal matrix whose entries are
  the binomial coefficients. In 1993 Call and Velleman demonstrated that
  it satisfies the beautiful relation $P=\exp(H)$ in which $H$ has the
  numbers 1, 2, 3, etc.\ on its superdiagonal and zeros elsewhere. We
  generalize this identity to the incidence algebras $I(A^*)$ and
  $I(\sym)$ of functions on words and permutations, respectively.  In
  $I(A^*)$ the entries of $P$ and $H$ count subwords; in $I(\sym)$ they
  count permutation patterns. Inspired by vincular permutation patterns
  we define what it means for a subword to be restricted by an auxiliary
  index set $R$; this definition subsumes both factors and (scattered)
  subwords.
  We derive a theorem for words corresponding to the Reciprocity Theorem
  for patterns in permutations: Up to sign, the coefficients in
  the Mahler expansion of a function counting subwords restricted by the
  set $R$ is given by a function counting subwords restricted by the
  complementary set $R^c$.
\end{abstract}

\maketitle
\thispagestyle{empty}

\section{Introduction}

Let us start by recalling some terminology from the theory of
posets; our presentation follows that of Stanley~\cite[sec.\ 3]{EC1}. Let
$(Q,\leq)$ be a poset. For $x,y\in Q$, let $[x,y]=\{ z : x\leq z\leq
y\}$ denote the \emph{interval} between $x$ and $y$. All posets
considered in this paper will be \emph{locally finite}: that is, each
interval $[x,y]$ is finite. We say that $y$ \emph{covers} $x$ if $[x,y]
= \{x,y\}$.

Let $\Int(Q)=\{(x,y) \in Q\times Q : x \leq y\}$; it is a set that can
be thought of as representing the intervals of $Q$. The \emph{incidence
  algebra}, $I(Q)$, of $(Q,\leq)$ over some field $K$ is the $K$-algebra
of all functions $F:\Int(Q) \to K$ with the usual structure as a vector
space over $K$ and multiplication (convolution) defined by
$$(FG)(x,y) = \sum_{x\leq z\leq y}F(x,z)G(z,y),
$$ and identity, $\delta$, defined by $\delta(x,y) = 1$ if $x = y$,
and $\delta(x,y) = 0$ if $x \neq y$. Let $\val{\psi}$ be the Iverson
bracket; it denotes a number that is $1$ if the statement $\psi$ is
satisfied, and $0$ otherwise. With this notation we have
$\delta(x,y)=\val{x=y}$. Two other prominent elements of the incidence
algebra are $\zeta(x,y)=1$ and $\eta(x,y)=\val{y\text{ covers }x}$.  One
can show (see Stanley~\cite[sec. 3]{EC1}) that $f\in I(Q)$ has a
two-sided inverse $f^{-1}$ if and only if $f(x,x)\neq 0$ for all $x\in
Q$. For instance, $\zeta$ is invertible and its inverse,
$\mu=\zeta^{-1}$, is known as the \emph{M\"obius function}.

The $(i,j)$ entry of the upper triangular $n\times n$ \emph{Pascal
  matrix} $P_n$ is defined as $\binom{j}{i}$. The superdiagonal matrix
$H_n$ is defined by letting the entries on the superdiagonal $j=i+1$
be $j$. Further, let $I_n$ denote the $n\times n$ identity matrix.
In 1993 Call and Velleman~\cite{CaVe1993} calculated the powers of $H_n$
and showed that $e^{H_n} = P_n$, where $e^{H_n}= I_n + H_n +
H_n^2/2! + H_n^3/3! + \cdots$.  As an example, for $n=3$
we have
$$
\exp\left(
\begin{bmatrix}
  0 & 1 & 0 & 0 \\
    & 0 & 2 & 0 \\
    &   & 0 & 3 \\
    &   &   & 0
\end{bmatrix}
\right) =
\begin{bmatrix}
  1 & 1 & 1 & 1 \\
    & 1 & 2 & 3 \\
    &   & 1 & 3 \\
    &   &   & 1
\end{bmatrix}.
$$

If we consider $\NN$ as a poset under the usual order on integers we may
view the result of Call and Velleman as an identity in the incidence
algebra $I(\NN)$: We have
\begin{equation}\label{eq:call-velleman}
  e^H = P,
\end{equation}
where $H(i,j)=j\eta(i,j)=j\val{j=i+1}$
and $P(i,j)=\binom{j}{i}$ are elements of $I(\NN)$, and
$$e^{H}= \delta + H + \frac{1}{2!}H^2 + \frac{1}{3!}H^3+\cdots.
$$
Note that the restriction of $H$ to integers smaller than some fixed
bound is nilpotent, and hence each entry in $e^H$ is a finite sum (in
fact, a single term). We shall generalize \eqref{eq:call-velleman} to
the incidence algebra over a poset of words, but first we need a few
more definitions.

Let $[n] = \{1,2,\dots,n\}$.  Let $A^*$ be the set of words with letters
in a given finite set (alphabet) $A$. A word $u=a_1 a_2 \dots a_k$, with
$a_i\in A$, is said to be a \emph{subword} of a word $v=b_1 b_2\dots
b_n$, with $b_i\in A$, if there is an order-preserving injection
$\varphi: [k]\to [n]$ such that $a_i = b_{\varphi(i)}$ for all $i\in
[k]$; that is, if $u$ is a subsequence of $v$. By an \emph{occurrence}
of $u$ (as a subword) in $v$ we shall mean the function $\varphi$, often
represented by the set $\{\varphi(i) : i \in [k]\}$.  For instance, the
word $baacbab$ contains four occurrences of $aab$. Following
Eilenberg~\cite[VIII.10]{Ei1976} we let $\binom{v}{u}$ denote the number
of occurrences of $u$ as a subword in $v$.  Note that if $a$ is a
letter, then $\binom{a^n}{a^k} = \binom{n}{k}$, where the right-hand
side is the ordinary binomial coefficient.

We will consider $A^*$ a poset under the \emph{subword order}, also
called division order. That is, $u\leq v$ in $A^*$ if and only if
$\binom{v}{u} > 0$. For $A=\{a,b\}$ the first four levels of the Hasse
diagram of this infinite poset are depicted in Figure~\ref{fig:words}.

\begin{figure}
$$
\begin{tikzpicture}[>=stealth, node distance=1.54cm]
  \node (aaa) {$aaa$};
  \node (aab) [right of=aaa] {$aab$};
  \node (aba) [right of=aab] {$aba$};
  \node (abb) [right of=aba] {$abb$};
  \node (baa) [right of=abb] {$baa$};
  \node (bab) [right of=baa] {$bab$};
  \node (bba) [right of=bab] {$bba$};
  \node (bbb) [right of=bba] {$bbb$};
  \node (aa) [below right of=aab] {$aa$};
  \node (ab) [below right of=aba] {$ab$};
  \node (ba) [below left  of=bab] {$ba$};
  \node (bb) [below left  of=bba] {$bb$};
  \node (a) [below right of=aa] {$a$};
  \node (b) [below left  of=bb] {$b$};
  \node (e) at ($(a)!0.5!(b)-(0,0.8)$) {$\eps$};
  \draw (e)--(a) (e)--(b);
  \draw (a)--(aa) (a)--(ab) (a)--(ba);
  \draw (b)--(ab) (b)--(ba) (b)--(bb);
  \draw (aa)--(aaa) (aa)--(aab) (aa)--(aba) (aa)--(baa);
  \draw (ab)--(aab) (ab)--(aba) (ab)--(abb) (ab)--(bab);
  \draw (ba)--(aba) (ba)--(baa) (ba)--(bab) (ba)--(bba);
  \draw (bb)--(abb) (bb)--(bab) (bb)--(bba) (bb)--(bbb);
\end{tikzpicture}
$$
\caption{The first four levels of the poset of words over $\{a,b\}$ with
  respect to the subword/division order.}\label{fig:words}
\end{figure}
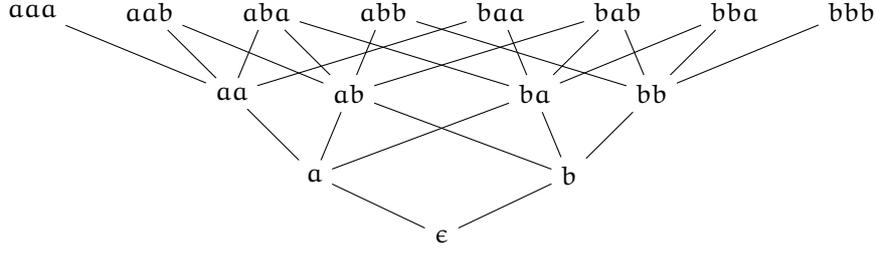

We are now ready to state the promised generalization of
\eqref{eq:call-velleman}. Recall that $\eta(x,y) = \val{y\text{ covers
  }x}$. For the poset $\NN$ we hence have $\eta(i,j)=1$ if, and only if,
$j=i+1$. For the poset $A^*$ we have $\eta(u,v)=1$ if, and only if, $u$
is a subword of $v$ and $|v|=|u|+1$.  Theorem~\ref{thm:P=e^H}, which we
prove in Section~\ref{sec:proof-of-thm}, then states the following:

\begin{quote}
  Define the elements $P$ and $H$ of the incidence algebra $I(A^*)$
  by $P(u,v) = \binom{v}{u}$ and $H(u,v) = \binom{v}{u}\eta(u, v)$.
  Then $P = e^H$.
\end{quote}

Note that if $A$ is a singleton, and $u$ and $v$ are words in $A^*$ of
length $n$ and $k$, respectively, then $u\leq v$ in $A^*$ precisely then
$k\leq n$ in $\NN$. Further, $P(u,v)=\binom{n}{k}$ and
$H(u,v)=\binom{n}{k}\val{n=k+1} = n\val{n=k+1}$.  Thus,
Equation~\ref{eq:call-velleman} is a special case of
Theorem~\ref{thm:P=e^H}.

We can also use the relation $P=e^H$ to derive a simple formula for the
powers of $P$. The following statement is Theorem~\ref{thm:powers-of-P} of
Section~\ref{sec:powers-of-P}.

\begin{quote}
  For $u,v\in A^*$ and any integer $d$ we have $P^d(u,v) =
  d^{|v|-|u|}P(u,v)$.
\end{quote}

Let us for a moment turn to permutations instead of words. Let $\sym_n$
be the set of permutations of $[n]$, and let $\sym=\cup_{n\geq 0}
\sym_n$. Two sequences of integers $a_1\dots a_k$ and $b_1\dots b_k$ are
\emph{order-isomorphic} if for every $i,j\in [k]$ we have $a_i<a_j
\Leftrightarrow b_i<b_j$. A permutation $\sigma\in\sym_k$ is said to be
contained in a permutation $\pi\in\sym_n$ if there is an
order-preserving injection $\varphi: [k]\to [n]$ such that
$\pi(\varphi(1))\pi(\varphi(2))\dots\pi(\varphi(k))$ is order-isomorphic
to $\sigma$. In this context, $\sigma$ is often called a
\emph{pattern}. By an \emph{occurrence} of $\sigma$ in $\pi$ we shall
mean the function $\varphi$, or the set $\{\varphi(i) : i \in
[k]\}$. For instance, the permutation $43152$ contains two
occurrences of the pattern $231$, namely $\{1,4,5\}$ and $\{2,4,5\}$. Let
$\binom{\pi}{\sigma}$ be the number of occurrences of $\sigma$ in
$\pi$. Note that $\binom{12\dots n}{12\dots k} = \binom{n}{k}$, where
the right-hand side is the ordinary binomial coefficient.


Much like with $A^*$ we endow $\sym$ with a poset structure by
postulating that $\sigma\leq\pi$ if, and only if,
$\binom{\pi}{\sigma}>0$. The first four levels of the Hasse diagram of
this infinite poset are depicted in Figure~\ref{fig:perms}. It turns out
that the $P=e^H$ equation in $I(A^*)$ has a natural counterpart in
$I(\sym)$:

\begin{figure}
$$
\begin{tikzpicture}[>=stealth,node distance=1.6cm]
  \node (123) {123};
  \node (132) [right of=123] {132};
  \node (213) [right of=132] {213};
  \node (231) [right of=213] {231};
  \node (312) [right of=231] {312};
  \node (321) [right of=312] {321};
  \node (12) [below right of=132] {12};
  \node (21) [below left  of=312] {21};
  \node (1) at ($(12)!0.5!(21)-(0,0.7)$) {1};
  \node (e) at ($(12)!0.5!(21)-(0,1.6)$) {$\eps$};
  \draw (e)--(1);
  \draw (1)--(12) (1)--(21);
  \draw (12)--(123) (12)--(132) (12)--(213) (12)--(231) (12)--(312);
  \draw (21)--(132) (21)--(213) (21)--(231) (21)--(312) (21)--(321);
\end{tikzpicture}
$$
\caption{The first four levels of the poset of permutations with respect
  to pattern containment.}\label{fig:perms}
\end{figure}
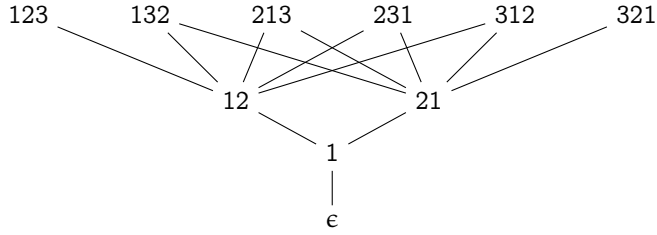

\begin{quote}
  Define the elements $P$ and $H$ of the incidence algebra $I(\sym)$
  by $P(\sigma,\pi) = \binom{\pi}{\sigma}$ and $H(\sigma,\pi) =
  \binom{\pi}{\sigma}\eta(\sigma,\pi)$. Then $P = e^H$.
\end{quote}



In 2011 Bränd\'en and the present author (see ~\cite{BrCl2011})
presented a ``Reciprocity Theorem'' for so called \emph{mesh patterns},
a class of permutation patterns introduced in the same paper.
Intuitively, an occurrence of a mesh pattern is an occurrence in the
sense defined above with additional restrictions on the relative
position of the entries of the occurrence. \emph{Vincular patterns}
(Babson and Steingrímsson, 2000~\cite{BaSt2000}) as well as
\emph{bivincular patterns} (Bousquet-M{\'e}lou et al.,
2010~\cite{BCDK2010}) can be seen as special mesh patterns. In this
paper we shall not need the general definition of a mesh pattern, rather
we just give the more specialized definition of a vincular pattern: Let
$\pi\in \sym_n$. Let $\sigma\in \sym_k$ and $R\subseteq [0,k]$. The
pair $p=(\sigma, R)$ is called a \emph{vincular pattern} and an
occurrence of $p$ in $\pi$ is an order-preserving injection $\varphi:
[k]\to [n]$ such that
$\pi(\varphi(1))\pi(\varphi(2))\dots\pi(\varphi(k))$ is order-isomorphic
to $\sigma$ and, for all $i\in R$, $\varphi(i+1) = \varphi(i)+1$, where
$\varphi(0)=0$ and $\varphi(k+1)=n+1$ by convention. Let $\binom{\pi}{p}$
denote the number of occurrences of $p$ in $\pi$. We will find it convenient
to write $\binom{\pi}{\sigma,R}$ rather than the typographically awkward
$\binom{\pi}{(\sigma,R)}$. For instance,
$$
\binom{43152}{231,\{0\}} = 1,\;
\binom{43152}{231,\{1\}} = 0,\;
\binom{43152}{231,\{2\}} = 2 \;\,\text{ and }\;\,
\binom{43152}{231,\{0,3\}} = 1.
$$

Any function $f : \sym\to\QQ$ can be written as a unique, but typically
infinite, linear combination of functions $\binom{\Cdot}{\sigma}$. To see
this, let $I(\sym)$ act on the function space
$\QQ^{\sym}$ by
$$(f\ast F)(\pi) = \sum_{\sigma \leq \pi}f(\sigma)F(\sigma,\pi).
$$
Thus, $f = \sum_\sigma c(\sigma)\binom{\Cdot}{\sigma}$ is equivalent to
$f = c \ast P$ and, since $P$ is invertible, $c = f \ast P^{-1}$.
Viewing $\binom{\Cdot}{p}$ as a function in $\QQ^{\sym}$, the
Reciprocity Theorem ~\cite[Thm. 1]{BrCl2011} then gives the coefficient
$c(\sigma)$ of $\binom{\Cdot}{\sigma}$ in terms of occurrences of the
\emph{dual} pattern $\dual{p} = (\sigma, [0,|\sigma|]\setminus R)$.
For any permutation $\pi$, we have
$$
\binom{\pi}{p} = \sum_{\upsilon\in A^*}\coeff{p}{\upsilon}\binom{\pi}{\upsilon},\;
\text{ where }
\coeff{p}{\upsilon} = (-1)^{|\upsilon|-|\sigma|}\binom{\upsilon}{\dual{p}}.
$$

In Section~\ref{sec:reciprocity} we present a corresponding theorem for
words. To make this explicit we need a definition: Let $u=a_1 a_2 \dots
a_k$, with $a_i\in A$, and let $v=b_1 b_2\dots b_n$, with $b_i\in A$.
Let $p=(u,R)$ with $R\subseteq [0,k]$. An \emph{occurrence} of $p$ in
$v$ is an order-preserving injection $\varphi: [k]\to [n]$ such that
$a_i = b_{\varphi(i)}$ for all $i\in [k]$, and, for all $i\in R$,
$\varphi(i+1) = \varphi(i)+1$, with the convention that $\varphi(0)=0$
and $\varphi(k+1)=n+1$. Let $\binom{v}{p}$ denote the number of
occurrences of $p$ in $v$. For example,
$$
\binom{baacbab}{aab, \{0\}} = 0,\;
\binom{baacbab}{aab, \{1\}} = 2,\;
\binom{baacbab}{aab, \{1,2\}} = 0\;\,\text{ and }\;\,
\binom{baacbab}{aab, \{1,3\}} = 1.
$$

We would like to draw attention to the following three important special
cases of the above definition:
\begin{enumerate}
\item if $R=\emptyset$ then $\binom{v}{p} = \binom{v}{u}$;
\item if $R=[1,k-1]$ then $\binom{v}{p}$ is the number of occurrences
  of $u$ as a factor in $v$;
\item if $R=[0,k]$ then $\binom{v}{p} = \delta(u,v)$.
\end{enumerate}
With regard to the second item, a word $u$ is a \emph{factor} of another
word $v$ if there are (possibly empty) words $x$ and $y$ such that $v =
xuy$.

We are now ready to state the Reciprocity Theorem for $I(A^*)$, which is
Theorem~\ref{thm:reciprocity} of Section~\ref{sec:reciprocity}.  For $u$
a word and $R\subseteq [0,|u|]$ let $p = (u,R)$, and let $\dual{p} = (u,
[0,|u|]\setminus R)$. Then, for any word $w$, we have
$$
\binom{w}{p} = \sum_{v\in A^*}\coeff{p}{v}\binom{w}{v},\;
\text{ where }
\coeff{p}{v} = (-1)^{|v|-|u|}\binom{v}{\dual{p}}.
$$

\section{$P$ is the exponential of $H$}\label{sec:proof-of-thm}

Let $P$ and $H$, in $I(A^*)$, be defined as in the introduction. The key to
proving that $P = e^H$ will be the simple formula for the powers of $H$
provided by the Lemma~\ref{lemma:H^l} below. Its proof will use some
terminology we now define: A subset $C$ of a poset $Q$ is called a chain
if for any pair of elements $x$ and $y$ in the subposet $C$ of $Q$ we
have $x\leq y$ or $y\leq x$. A chain $x_0<x_1<\dots < x_n$ is
\emph{saturated} if $x_i$ covers $x_{i-1}$ for each $i\in [n]$.

\begin{lemma}\label{lemma:H^l}
  For $u,v\in A^*$ we have
  \begin{equation}\label{eq:H^k}
    H^\ell(u,v) = \ell! \binom{v}{u} \val{\,|v|=|u|+\ell\,}.
  \end{equation}
\end{lemma}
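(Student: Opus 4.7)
The plan is to expand $H^\ell(u,v)$ as a convolution sum and then evaluate the result by induction on $\ell$. Unfolding the $\ell$-fold convolution gives
$$
H^\ell(u,v) \;=\; \sum_{u = x_0 \leq x_1 \leq \cdots \leq x_\ell = v}\; \prod_{i=1}^{\ell} H(x_{i-1}, x_i),
$$
and since $H(x_{i-1},x_i) = \binom{x_i}{x_{i-1}}\eta(x_{i-1},x_i)$ vanishes unless $x_i$ covers $x_{i-1}$, the only contributions come from saturated chains from $u$ to $v$ of length $\ell$. In particular every contributing chain forces $|v|=|u|+\ell$, which is exactly the Iverson bracket appearing in~\eqref{eq:H^k}. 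What remains is to prove that when $|v|=|u|+\ell$,
$$
\sum_{u = x_0 < x_1 < \cdots < x_\ell = v \text{ saturated}}\; \prod_{i=1}^{\ell}\binom{x_i}{x_{i-1}} \;=\; \ell!\,\binom{v}{u}.
$$

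I would do this by induction on $\ell$. The base case $\ell=0$ reduces to $H^0=\delta$ and is handled by the observation that two subword-comparable words of the same length must coincide, so $\binom{v}{u}\val{|v|=|u|} = \val{u=v} = \delta(u,v)$. For the inductive step I would write $H^\ell = H^{\ell-1}\cdot H$, insert the inductive hypothesis into $H^{\ell-1}(u,w)$, and use that $H(w,v)$ is nonzero only when $v$ covers $w$; after cancelling the factor $(\ell-1)!$ the whole statement reduces to the identity
$$
\sum_{w \text{ covered by } v}\binom{w}{u}\binom{v}{w} \;=\; \ell\,\binom{v}{u} \qquad (|v|=|u|+\ell).
$$

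To prove this identity I would switch from summing over words to summing over positions: for $|w|=|v|-1$, the coefficient $\binom{v}{w}$ equals the number of positions $j\in[|v|]$ whose deletion from $v$ yields $w$, so the left-hand side equals $\sum_{j=1}^{|v|}\binom{v^{(j)}}{u}$, where $v^{(j)}$ denotes $v$ with its $j$th letter removed. This sum enumerates pairs $(j,\varphi)$ with $\varphi$ an occurrence of $u$ in $v^{(j)}$, which biject with pairs $(\psi,j)$ where $\psi$ is an occurrence of $u$ in $v$ and $j\in[|v|]\setminus\psi([|u|])$. For each fixed $\psi$ there are exactly $|v|-|u|=\ell$ admissible positions $j$, giving the factor $\ell$ and closing the induction.

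The only step requiring real care is the bookkeeping for repeated letters: distinct positions $j$ in $v$ can produce the same deletion $w$, so the sums $\sum_w$ and $\sum_j$ disagree unless each collision is recorded through $\binom{v}{w}$. Passing from the word-indexed sum to the position-indexed sum is precisely what defuses this subtlety, and I expect it to be the only mildly delicate point; no deeper combinatorial obstacle is anticipated.
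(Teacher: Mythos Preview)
Your proof is correct. Both you and the paper begin by expanding $H^\ell(u,v)$ as a sum over saturated chains, but then diverge: the paper sets up a direct bijection between (saturated chain, choice of occurrences $\o_0\subset\cdots\subset\o_\ell$ of each $x_i$ in $v$) and (occurrence $\o_0$ of $u$ in $v$, permutation of the $\ell$-set $[n]\setminus\o_0$), reading off the permutation from the singletons $\o_{i+1}\setminus\o_i$. Your argument instead peels off one factor of $H$ and reduces, by induction, to the single-step identity $\sum_{j=1}^{|v|}\binom{v^{(j)}}{u}=\ell\binom{v}{u}$, which you prove by the natural bijection between pairs $(j,\varphi)$ and pairs $(\psi,j)$ with $j\notin\psi([|u|])$. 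In effect your induction is the paper's bijection unrolled one layer at a time; your version is slightly more elementary and localizes the repeated-letter bookkeeping to a single deletion, while the paper's global bijection makes the appearance of the factor $\ell!$ (as saturated chains in a Boolean interval) more transparent in one stroke.
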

\begin{proof}
  We shall give a combinatorial proof. Let $n=|v|$. Expanding the
  left-hand side, $H^{\ell}(u,v)$, we get
  $$\sum H(x_0,x_1)H(x_1,x_2)\dots H(x_{\ell-1},x_{\ell}),
  $$
  where the sum is over all saturated chains
  $u=x_0<x_1<\dots<x_{\ell-1}<x_\ell = v$. Assume that we are given such
  a chain. By transitivity we may consider each $x_i$ as a subword of
  $v$; let $\o_i\subseteq [n]$ be a set of indices that determine the
  subword $x_i$ in $v$. Because the chain $x_0<x_1<\dots<x_\ell$ is
  saturated, so is the chain
  $\o_0\subset\o_1\subset\dots\subset\o_\ell$, and thus
  $\o_{i+1}\setminus \o_i$ is a singleton. Suppose that
  $\o_{i+1}\setminus \o_i = \{m_i\}$. Then $m_0m_1\dots m_{\ell-1}$ is a
  permutation of $\o_\ell\setminus \o_0$. To summarize, any given
  saturated chain $u=x_0<x_1<\dots<x_{\ell-1}<x_\ell = v$ together with
  occurrences $\o_0$, $\o_1$, \dots, $\o_\ell$ of $x_0$, $x_1$, \dots,
  $x_\ell$, respectively, in $v$, determine a pair consisting of a
  permutation of the $\ell$ elements of $\o_\ell\setminus \o_0$ and an
  occurrence $\o_0$ of $u$ in $v$.

  Conversely, let $\o_\ell = [n]$ be the index set of $v$, and let
  $\o_0$ be an occurrence of $u$ in $v$. Let $m_0 m_1 \dots m_{\ell-1}$
  be a permutation of $\o_\ell\setminus \o_0$. We construct a saturated
  chain $u=x_0<x_1<\dots<x_{\ell-1}<x_\ell = v$ and occurrences $\o_0$,
  $\o_1$, \dots, $\o_\ell$ of $x_0$, $x_1$, \dots, $x_\ell$,
  respectively, in $v$ by letting $\o_{i+1} = \o_i \cup \{m_i\}$.
\end{proof}

\begin{theorem}\label{thm:P=e^H}
  Define the elements $P$ and $H$ of the incidence algebra $I(A^*)$
  by $P(u,v) = \binom{v}{u}$ and $H(u,v) = \binom{v}{u}\eta(u, v)$.
  Then $P = e^H$.
\end{theorem}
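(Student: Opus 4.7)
The plan is to apply the lemma directly and observe that the exponential series collapses to a single term at each pair $(u,v)$. By definition,
$$e^H(u,v) \;=\; \sum_{\ell\geq 0}\frac{1}{\ell!}H^\ell(u,v),$$
and I first want to argue that this is a finite sum (in fact a single term) whenever $u\leq v$, so there is no convergence issue to worry about in the incidence algebra. Substituting the formula from Lemma~\ref{lemma:H^l} into the right-hand side, the factor $\val{|v|=|u|+\ell}$ kills every summand except the one with $\ell=|v|-|u|$, so the sum is automatically finite at each pair.

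Once that is in place, the computation is a one-liner. With $\ell=|v|-|u|$, the $\ell!$ in the lemma cancels the $1/\ell!$ from the exponential series, leaving
$$e^H(u,v) \;=\; \sum_{\ell\geq 0}\frac{1}{\ell!}\,\ell!\,\binom{v}{u}\val{\,|v|=|u|+\ell\,} \;=\; \binom{v}{u} \;=\; P(u,v).$$
For a sanity check I would verify the degenerate case $u=v$: here $\ell=0$, $H^0=\delta$, and the formula gives $\binom{v}{v}=1=P(v,v)$, which is consistent both with the lemma (taking $0!=1$) and with the identity $\delta$ in $I(A^*)$.

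The main obstacle, such as it is, was already overcome in Lemma~\ref{lemma:H^l}; once the combinatorial identification of saturated chains with pairs (permutation of the extra indices, occurrence of $u$ in $v$) is available, Theorem~\ref{thm:P=e^H} follows with no further work. The only thing requiring a brief comment in the write-up is that the apparently infinite sum defining $e^H$ is really a single non-zero term at each $(u,v)\in\Int(A^*)$, so the identity is meaningful in $I(A^*)$ regardless of any topology on the algebra.
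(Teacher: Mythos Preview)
Your proof is correct and follows exactly the same route as the paper: apply Lemma~\ref{lemma:H^l}, let the Iverson bracket pick out the single term $\ell=|v|-|u|$, and cancel $\ell!$ against $1/\ell!$. Your added remarks on finiteness and the $u=v$ sanity check are fine but optional; the paper dispatches the whole thing in one displayed line.
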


\begin{proof}
  By Lemma~\ref{lemma:H^l} we have
  \begin{equation*}
  (e^H)(u,v)
  = \sum_{\ell\geq 0}\frac{1}{\ell!}H^\ell(u,v)
  = \sum_{\ell\geq 0}\binom{v}{u} \val{\, |v|=|u|+\ell\,}
  = \binom{v}{u}.\qedhere
  \end{equation*}
\end{proof}

\begin{example}
  Let us illustrate Lemma~\ref{lemma:H^l}
  and its proof for $A=\{a,b\}$, $\ell=2$, $u=ab$ and $v=aaba$. The
  left-hand side of \eqref{eq:H^k} is
  \begin{equation*}
    H(ab,aab)H(aab,aaba) \,+\, H(ab,aba)H(aba,aaba) = 2\cdot 1 \,+\, 1\cdot 2,
  \end{equation*}
  while the right-hand side is $2!\binom{aaba}{ab} = 2\cdot 2$. The
  following table gives the saturated chains and corresponding pairs as
  in the proof of Lemma~\ref{lemma:H^l}:
  $$
  \begin{array}{ccc}
    \{1,3\}\subset \{1,2,3\} \subset \{1,2,3,4\} && (24,\{1,3\}); \\
    \{2,3\}\subset \{1,2,3\} \subset \{1,2,3,4\} && (14,\{2,3\}); \\
    \{1,3\}\subset \{1,3,4\} \subset \{1,2,3,4\} && (42,\{1,3\}); \\
    \{2,3\}\subset \{2,3,4\} \subset \{1,2,3,4\} && (41,\{2,3\}).
  \end{array}
  $$
\end{example}

\begin{example}
  If we only consider words shorter than some fixed length, then the
  poset of words is finite and the elements of the incidence algebra can
  be seen as upper triangular matrices. In the following example we
  consider words over $\{a,b\}$ of length at most 2. Then
  $$H =
  \begin{bmatrix}
    \;0 & \binom{a}{\eps} & \binom{b}{\eps} & 0 & 0 & 0 & 0       \smallskip\\
        & 0 & 0 & \binom{aa}{a}& \binom{ab}{a}& \binom{ba}{a}& 0  \medskip\\
        &   & 0 & 0 & \binom{ab}{b}& \binom{ba}{b}& \binom{bb}{b} \smallskip\\
        &   &   & 0 & 0 & 0 & 0 \\
        &   &   &   & 0 & 0 & 0 \\
        &   &   &   &   & 0 & 0 \\
        &   &   &   &   &   & 0
  \end{bmatrix} =
  \begin{bmatrix}
    0 & 1 & 1 & 0 & 0 & 0 & 0 \\
      & 0 & 0 & 2 & 1 & 1 & 0 \\
      &   & 0 & 0 & 1 & 1 & 2 \\
      &   &   & 0 & 0 & 0 & 0 \\
      &   &   &   & 0 & 0 & 0 \\
      &   &   &   &   & 0 & 0 \\
      &   &   &   &   &   & 0
  \end{bmatrix}.
  $$
  Note that all eigenvalues of $H$ are zero and thus $H$ is
  nilpotent.
  To be more precise, $H^i$ is the zero matrix for all $i\geq 3$, and
  $$
    e^H
    = I + H + \frac{1}{2}H^2
    =
    \begin{bmatrix}
      1 & 1 & 1 & 1 & 1 & 1 & 1 \\
        & 1 & 0 & 2 & 1 & 1 & 0 \\
        &   & 1 & 0 & 1 & 1 & 2 \\
        &   &   & 1 & 0 & 0 & 0 \\
        &   &   &   & 1 & 0 & 0 \\
        &   &   &   &   & 1 & 0 \\
        &   &   &   &   &   & 1
    \end{bmatrix},
  $$
  which, in agreement with Theorem~\ref{thm:P=e^H}, is equal to
  $$
    \begin{bmatrix}
      \binom{\eps}{\eps} & \binom{a}{\eps} & \binom{b}{\eps} & \binom{aa}{\eps}
      & \binom{ab}{\eps}& \binom{ba}{\eps}& \binom{bb}{\eps}
      \medskip\\
        &\binom{a}{a} & 0 &\binom{aa}{a} &\binom{ab}{a} &\binom{ba}{a} & 0
      \medskip\\
        &   &\binom{b}{b} & 0 & \binom{ab}{b} &\binom{ba}{b} &\binom{bb}{b}
      \medskip\\
        &   &   & \binom{aa}{aa}& 0 & 0 & 0 \medskip\\
        &   &   &   & \binom{ab}{ab}& 0 & 0 \medskip\\
        &   &   &   &   & \binom{ba}{ba}& 0 \medskip\\
        &   &   &   &   &   & \binom{bb}{bb}
    \end{bmatrix}.
  $$
\end{example}

The powerset of $[n]$ with respect to the subset relation is a poset
called the \emph{boolean algebra} and is denoted $B_n$. We note that one
part of the proof of Lemma~\ref{lemma:H^l} above is the well known fact
that the number of saturated chains from $\emptyset$ to $[n]$ in $B_n$
is $n!$. Recall that $\zeta(S,T)=1$ and
$\eta(S,T)=\val{T\text{ covers }S}$.
The proof of the following proposition, which we omit, is
very similar to but slightly easier than the proof of
Lemma~\ref{lemma:H^l}.

\begin{proposition}\label{prop:e^zeta=eta}
  In the incidence algebra $I(B_n)$ we have $e^\eta=\zeta$.
\end{proposition}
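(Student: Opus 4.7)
The plan is to mimic the proof of Lemma~\ref{lemma:H^l}, exploiting the fact that in $B_n$ every covering pair $S\subset T$ satisfies $\eta(S,T)=1$ without any additional multiplicity. Concretely, I will first establish the analogue of \eqref{eq:H^k}, namely
$$\eta^\ell(S,T) = \ell!\,\val{\,|T\setminus S|=\ell\,}.$$
Granted this, summing yields
$$e^\eta(S,T) = \sum_{\ell\geq 0}\frac{1}{\ell!}\eta^\ell(S,T) = \sum_{\ell\geq 0}\val{\,|T\setminus S|=\ell\,} = 1 = \zeta(S,T),$$
since exactly one value of $\ell$, namely $\ell=|T\setminus S|$, contributes to the sum.

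To prove the formula for $\eta^\ell(S,T)$, I would expand the convolution as
$$\eta^\ell(S,T) = \sum \eta(S,X_1)\eta(X_1,X_2)\cdots \eta(X_{\ell-1},T),$$
where the sum ranges over chains $S=X_0\subset X_1\subset\cdots\subset X_\ell=T$ in $B_n$. Because $\eta(X_{i-1},X_i)$ equals $1$ when $X_i$ covers $X_{i-1}$ and vanishes otherwise, the sum collapses to the number of saturated chains of length exactly $\ell$ from $S$ to $T$. In $B_n$ every cover step adds a single element, so such chains exist only when $|T\setminus S|=\ell$; in that case they are in obvious bijection with linear orderings of $T\setminus S$, by sending $X_0\subset X_1\subset\cdots\subset X_\ell$ to the sequence of singletons $X_1\setminus X_0,\, X_2\setminus X_1,\,\dots,\, X_\ell\setminus X_{\ell-1}$. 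This yields exactly $\ell!$ chains, as required.

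The only genuine ``obstacle'' is the bookkeeping in matching saturated chains with orderings of $T\setminus S$, which is precisely the combinatorial core used in Lemma~\ref{lemma:H^l}; here we do not need to carry along an auxiliary occurrence $\o_0$ of a subword, which accounts for the author's remark that this proof is ``slightly easier.''
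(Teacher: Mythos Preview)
Your argument is correct and is exactly the approach the paper has in mind: compute $\eta^\ell(S,T)=\ell!\,\val{\,|T\setminus S|=\ell\,}$ by identifying saturated chains from $S$ to $T$ with linear orderings of $T\setminus S$, then sum to get $e^\eta=\zeta$. The paper omits the proof but spells out this very computation in the paragraph following the proposition, and you have also correctly identified why it is ``slightly easier'' than Lemma~\ref{lemma:H^l}.
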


The simple formula $\mu(S,T) = (-1)^{|T\setminus S|}$ for the M\"obius
function of the boolean algebra can be derived from the isomorphism
$B_n\cong \mathbf{2}^n$ using the so called product
rule~\cite[Ex. 3.8.3]{EC1}. This formula also follows from
Proposition~\ref{prop:e^zeta=eta}: Let $S,T\in B_n$. As in the proof of
Lemma~\ref{lemma:H^l}, the number of saturated chains starting at $S$
and ending in $T$ is $\ell!$, where $\ell=|T\setminus S|$. The number of
such chains is also, however, $\eta^\ell(S,T)$. To be more precise
$\eta^\ell(S,T) = \ell!\val{\,|T|=|S|+\ell\,}$, and thus
$$\mu(S,T) = \zeta^{-1}(S,T) = (e^{-\eta})(S,T) =
\sum_{\ell\geq 0}\frac{(-1)^{\ell}}{\ell!}\eta^\ell(S,T) = (-1)^\ell.
$$

Let us now consider the permutation poset $\sym$. It is easy to see that
the proof of Lemma~\ref{lemma:H^l} can be modified to apply to the
setting of the incidence algebra $I(\sym)$, and we close this section by
stating the counterpart of Theorem~\ref{thm:P=e^H} that follows from
doing so.

\begin{theorem}
  Define the elements $P$ and $H$ of the incidence algebra $I(\sym)$
  by $P(\sigma,\pi) = \binom{\pi}{\sigma}$ and $H(\sigma,\pi) =
  \binom{\pi}{\sigma}\eta(\sigma,\pi)$. Then $P = e^H$.
\end{theorem}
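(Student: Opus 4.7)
The plan is to prove the permutation analog of Lemma~\ref{lemma:H^l}:
\[
H^\ell(\sigma,\pi) \;=\; \ell!\binom{\pi}{\sigma}\val{\,|\pi|=|\sigma|+\ell\,},
\]
and then conclude exactly as in the proof of Theorem~\ref{thm:P=e^H}. Once the lemma is established, only the single term $\ell=|\pi|-|\sigma|$ survives in the expansion of $e^H$, so
\[
(e^H)(\sigma,\pi) \;=\; \sum_{\ell\geq 0}\frac{1}{\ell!}H^\ell(\sigma,\pi) \;=\; \binom{\pi}{\sigma} \;=\; P(\sigma,\pi).
\]

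To prove the lemma, I will run the same combinatorial bijection as in the word case with ``subword'' replaced by ``pattern''. Set $n=|\pi|$ and expand
\[
H^\ell(\sigma,\pi) \;=\; \sum H(\tau_0,\tau_1)H(\tau_1,\tau_2)\cdots H(\tau_{\ell-1},\tau_\ell),
\]
the sum ranging over saturated chains $\sigma=\tau_0<\tau_1<\cdots<\tau_\ell=\pi$ in $\sym$. Each factor $H(\tau_{i-1},\tau_i)$ counts the occurrences of $\tau_{i-1}$ in $\tau_i$, so a summand records such an occurrence at every step. Composing these occurrences upward through the chain (starting from the identity occurrence of $\pi=\tau_\ell$ in itself) produces an ascending sequence $\o_0\subset\o_1\subset\cdots\subset\o_\ell=[n]$ of index sets with $|\o_i|=|\sigma|+i$, so each $\o_i\setminus\o_{i-1}$ is a singleton $\{m_{i-1}\}$ and $m_0 m_1\dots m_{\ell-1}$ is a permutation of $[n]\setminus\o_0$. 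Conversely, given an occurrence $\o_0$ of $\sigma$ in $\pi$ and a permutation $m_0m_1\dots m_{\ell-1}$ of $[n]\setminus\o_0$, set $\o_i=\o_0\cup\{m_0,\dots,m_{i-1}\}$ and let $\tau_i$ be the pattern of $\pi$ on the positions in $\o_i$; the chain $\tau_0<\tau_1<\cdots<\tau_\ell$ is then saturated, and each inclusion $\o_{i-1}\subset\o_i$ exhibits an occurrence of $\tau_{i-1}$ in $\tau_i$. The two constructions are mutually inverse, yielding $\binom{\pi}{\sigma}\cdot\ell!$ summands when $\ell=|\pi|-|\sigma|$ and none otherwise.

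The only point that demands a moment's thought, and the closest thing to an obstacle, is the verification that restricting $\pi$ to a subset of positions yields a well-defined pattern and that subset inclusion of positions induces pattern containment via the evident order-preserving injection. Both assertions follow directly from the order-isomorphism definition of containment, so this amounts to a sanity check; the rest of the argument is formally identical to the proof of Lemma~\ref{lemma:H^l}.
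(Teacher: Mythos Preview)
Your proposal is correct and follows exactly the approach the paper indicates: the paper does not give a separate proof for the permutation case but simply remarks that the proof of Lemma~\ref{lemma:H^l} can be modified to $I(\sym)$, and you have carried out precisely that modification. Your explicit note that restricting $\pi$ to a subset of positions yields a well-defined pattern, and that subset inclusion induces pattern containment, is the one genuinely new ingredient over the word case, and you have identified it correctly.
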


\begin{example}
  If we only consider permutations shorter than some fixed length, then
  the poset of words is finite and the elements of the incidence algebra
  can be seen as upper triangular matrices. In the following example we
  consider permutations of length at most 2. We have
  \begin{align*}
  H =
  \begin{bmatrix}
    \;0 & \binom{1}{\eps} & 0             & 0 \smallskip \\
        & 0               & \binom{12}{1} & \binom{21}{1} \smallskip \\
        &                 & 0             & 0 \\
        &                 &               & 0
  \end{bmatrix} &=
  \begin{bmatrix}
    0 & 1 & 0 & 0 \\
      & 0 & 2 & 2 \\
      &   & 0 & 0 \\
      &   &   & 0
  \end{bmatrix}\quad\text{and}\\
  P = I + H + \frac{1}{2}
  \begin{bmatrix}
    0 & 0 & 2 & 2 \\
      & 0 & 0 & 0 \\
      &   & 0 & 0 \\
      &   &   & 0
  \end{bmatrix} &=
  \begin{bmatrix}
    1 & 1 & 1 & 1 \\
      & 1 & 2 & 2 \\
      &   & 1 & 0 \\
      &   &   & 1
  \end{bmatrix} =
  \begin{bmatrix}
    \binom{\eps}{\eps} &\binom{1}{\eps} &\binom{12}{\eps} &\binom{21}{\eps} \medskip\\
                       &\binom{1}{1}    &\binom{12}{1}    &\binom{21}{1}    \medskip\\
                       &                &\binom{12}{12}   & 0               \smallskip\\
                       &                &                 &\binom{21}{21}
  \end{bmatrix}.
  \end{align*}
\end{example}

\section{Powers of $P$}\label{sec:powers-of-P}

From Sakarovitch and Simon~\cite[Corollary 6.3.8]{Lo1983} we learn that
\begin{equation}\label{eq:inverse}
  \sum_{w\in A^*} (-1)^{|u|+|v|}\binom{w}{u}\binom{v}{w} = \delta(u,v).
\end{equation}
Their proof uses the so called Magnus transformation, the algebra
endomorphism $a\mapsto a+1$ of $\ZZ\langle A\rangle$. An analog of this
result for permutations is a consequence of the Reciprocity Theorem for
mesh patterns~\cite[Corollary~2]{BrCl2011}; more recently,
Vargas~\cite{Va2014} has given a proof of the same result using an
analog of the Magnus transformation for permutations.

In the context of the incidence algebra, Equation \ref{eq:inverse}
states that $P^{-1}(u,v) = (-1)^{|v|-|u|}P(u,v)$. We could use
Theorem~\ref{thm:P=e^H} to give a simple alternative proof of
this. Indeed, $P=e^H$ implies that $P^{-1}=e^{-H}$. We can in fact prove
something stronger:

\begin{theorem}\label{thm:powers-of-P}
  For any words $u$ and $v$ in $A^*$, and any integer $d$, we have
  $$P^d(u,v) = d^{|v|-|u|}P(u,v).
  $$
  If $d\neq 0$ and $D_d(u,v) = d^{|u|}\delta(u,v)$, then an equivalent
  way of stating this result is
  $P^d = D_d^{-1} P D_d = D_{1/d} P D_d$. In particular, $P^{-1} = D_{-1}PD_{-1}$.
\end{theorem}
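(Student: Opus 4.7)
My plan is to deduce the formula directly from Theorem~\ref{thm:P=e^H} together with the explicit expression for $H^\ell$ provided by Lemma~\ref{lemma:H^l}. Since $H$ trivially commutes with itself, the usual manipulations with the exponential series remain valid in $I(A^*)$, and $P = e^H$ implies $P^d = e^{dH}$ for every integer $d$. (For $d<0$ one uses in addition that $P^{-1} = e^{-H}$, which follows from $e^H e^{-H} = e^{H-H} = \delta$; the series is well defined as each $(u,v)$-entry involves only finitely many nonzero terms.)

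The main computation is then a single line:
\begin{equation*}
P^d(u,v) \,=\, (e^{dH})(u,v) \,=\, \sum_{\ell\geq 0}\frac{d^{\ell}}{\ell!}H^{\ell}(u,v) \,=\, \sum_{\ell\geq 0}\frac{d^{\ell}}{\ell!}\,\ell!\binom{v}{u}\val{\,|v|=|u|+\ell\,},
\end{equation*}
and only the term with $\ell=|v|-|u|$ survives (and if $u\not\leq v$ then $\binom{v}{u}=0$ and both sides vanish), giving $d^{|v|-|u|}\binom{v}{u} = d^{|v|-|u|}P(u,v)$ as required.

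For the reformulation, I would simply verify $D_{1/d} = D_d^{-1}$ and $P^d = D_{1/d}PD_d$ entrywise. The first is immediate because $D_d$ is supported on the diagonal of $\Int(A^*)$, so $(D_d D_{1/d})(u,v) = d^{|u|}(1/d)^{|u|}\delta(u,v) = \delta(u,v)$. The second is also immediate: convolution with a diagonal element collapses the sum, yielding
\begin{equation*}
(D_{1/d}\,P\,D_d)(u,v) \,=\, (1/d)^{|u|}\,P(u,v)\,d^{|v|} \,=\, d^{|v|-|u|}P(u,v).
\end{equation*}
Specializing to $d=-1$ recovers $P^{-1} = D_{-1}PD_{-1}$, which is Equation~\ref{eq:inverse}.

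There is no real obstacle here; the only point that demands a moment's care is the legitimacy of writing $P^d = e^{dH}$ for negative integers inside the incidence algebra, which is handled by the commutativity of $H$ with itself and the pointwise finiteness of all exponential sums involved.
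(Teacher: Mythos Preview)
Your proof is correct and follows essentially the same route as the paper's (first) proof: write $P^d = e^{dH}$ via Theorem~\ref{thm:P=e^H}, expand the exponential, and invoke Lemma~\ref{lemma:H^l} so that only the $\ell=|v|-|u|$ term survives. Your added remarks justifying $P^d=e^{dH}$ for negative $d$ and verifying the $D_d$-conjugation identity entrywise are details the paper leaves implicit, but the core argument is the same.
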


\begin{proof}
  We have
  \begin{align*}
    P^d(u,v)
    &= (e^H)^d(u,v) &&\text{by Theorem~\ref{thm:P=e^H}} \\
    &= e^{dH}(u,v)\\
    &= \sum_{\ell\geq 0}\frac{1}{\ell!}(dH)^\ell(u,v) \\
    &= \sum_{\ell\geq 0}\frac{d^\ell}{\ell!}H^\ell(u,v) \\
    &= \sum_{\ell\geq 0}d^{\ell}P(u,v)\val{\,|v|=|u|+\ell\,}
    && \text{by Lemma~\ref{lemma:H^l}}\\
    &= d^{|v|-|u|}P(u,v).&& \qedhere
  \end{align*}
\end{proof}

\begin{example}
  If we restrict $P$ to words of length at most $n$ and let $A$ be a
  singleton alphabet, then the function $P$ reduces to the $n\times n$
  Pascal matrix $P_n$. Thus, a corollary to
  Theorem~\ref{thm:powers-of-P} is that the Pascal matrix satisfies
  $P^d_n(i,j) = d^{j-i}P_n(i,j)$. This is, however, a known result due
  to Call and Velleman~\cite{CaVe1993}.
\end{example}

A \emph{multichain} in a poset $Q$ is a multiset whose underlying set is
a chain in $Q$.  It is well known, and easy to prove, that the number of
multichains
$$
\emptyset=S_0\subseteq S_1 \subseteq S_2 \subseteq\dots\subseteq
S_{d-1}\subseteq S_d = [\ell]
$$
in the boolean algebra $B_\ell$ is $d^\ell$. Indeed, such a multichain
is uniquely specified by a function $t : [\ell] \to [d]$ where $t(i)$ is
the smallest $j\in [d]$ for which $i\in S_j$. We use this fact in the
following alternative proof of Theorem~\ref{thm:powers-of-P}.

\begin{proof}[Combinatorial proof of Theorem~\ref{thm:powers-of-P}]
  Let $n=|v|$ and $\ell = |v|-|u|$. Expanding the
  left-hand side, $P^{d}(u,v)$, we get
  $$\sum P(x_0,x_1)P(x_1,x_2)\dots P(x_{d-1},x_{d}),
  $$
  where the sum is over all multichains $u=x_0\leq x_1\leq \dots \leq
  x_{d-1}\leq x_d = v$.  Assume that we are given such a chain. By
  transitivity we may consider each $x_i$ as a subword of $v$; let
  $\o_i\subseteq [n]$ be a set of indices that determine the subword
  $x_i$ in $v$. Let $S_0 = \o_0\setminus \o_0 = \emptyset,
  S_1=\o_1\setminus \o_0$, $S_2=\o_2\setminus \o_0$, etc, and let
  $S=\o_d\setminus\o_0$. Then $\emptyset=S_0\subseteq S_1 \subseteq S_2
  \subseteq\dots\subseteq S_{d-1}\subseteq S_d = S$ is a multichain in
  the boolean algebra on $S$. As in the paragraph preceding this proof,
  let $t : S\to [d]$ be the function specifying that multichain. Because
  $|S|=\ell$, the number of such functions, and therefore also the
  number of multichains, is $d^\ell$.

  To summarize, any given multichain $u=x_0\leq x_1\leq \dots \leq
  x_{d-1}\leq x_d = v$ together with occurrences $\o_0$, $\o_1$, \dots,
  $\o_d$ of $x_0$, $x_1$, \dots, $x_d$, respectively, in $v$, determine
  a pair consisting of a function $t:\o_d\setminus\o_0 \to [d]$ and an
  occurrence $\o_0$ of $u$ in $v$. The total number of such pairs is, of
  course, $d^{|v|-|u|}P(u,v)$.  It is also easy to see how this could be
  reversed and thus the procedure described is a bijection.
\end{proof}

Either of the two proofs we have presented for
Theorem~\ref{thm:powers-of-P} can easily be adopted to the permutation
setting. Thus we have rediscovered the following result, which was known
to Petter Bränd\'en already in 2002 (personal communication).

\begin{theorem}[Bränd\'en 2002]
  For $\sigma,\pi\in\sym$ and any integer $d$ we have $P^d(\sigma,\pi)
  = d^{|\pi|-|\sigma|}P(\sigma,\pi)$.
\end{theorem}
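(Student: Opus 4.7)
The plan is to mimic the algebraic proof of Theorem~\ref{thm:powers-of-P} verbatim, with the boolean-algebra-on-positions argument transplanted to permutations. The backbone is an analog of Lemma~\ref{lemma:H^l} in $I(\sym)$:
\begin{equation*}
H^{\ell}(\sigma,\pi) \;=\; \ell!\,\binom{\pi}{\sigma}\val{\,|\pi|=|\sigma|+\ell\,}.
\end{equation*}
I would prove this by expanding $H^{\ell}(\sigma,\pi)$ as a sum $\sum H(\tau_0,\tau_1)\cdots H(\tau_{\ell-1},\tau_\ell)$ over saturated chains $\sigma=\tau_0<\tau_1<\cdots<\tau_\ell=\pi$ in $\sym$. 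Given such a chain, I use transitivity of pattern containment to realize each $\tau_i$ as an occurrence $\o_i\subseteq[n]$ inside $\pi$ (where $n=|\pi|$), with $\o_0\subset\o_1\subset\cdots\subset\o_\ell$ a saturated chain in $B_n$, so each $\o_{i+1}\setminus\o_i$ is a singleton $\{m_i\}$. The bijection then reads a saturated chain plus occurrences as a pair consisting of an occurrence $\o_0$ of $\sigma$ in $\pi$ together with a linear ordering $m_0m_1\cdots m_{\ell-1}$ of $\o_\ell\setminus\o_0$, accounting for the $\ell!$ factor.

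The one subtlety worth verifying, which is really the only place the permutation setting differs combinatorially from the word setting, is the converse direction: given an occurrence $\o_0$ of $\sigma$ in $\pi$ and a permutation $m_0\cdots m_{\ell-1}$ of $[n]\setminus\o_0$, each intermediate set $\o_i=\o_0\cup\{m_0,\ldots,m_{i-1}\}$ must actually be an occurrence \emph{of some pattern} $\tau_i\leq\pi$, and it automatically is, because the restriction of $\pi$ to $\o_i$ always flattens to a well-defined pattern. So the combinatorial argument transfers without change.

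With the lemma in hand, the theorem follows by the same three lines as in the word case. Using $P=e^H$ in $I(\sym)$ (the theorem stated just before Section~3),
\begin{equation*}
P^{d}(\sigma,\pi)
= e^{dH}(\sigma,\pi)
= \sum_{\ell\geq 0}\frac{d^{\ell}}{\ell!}H^{\ell}(\sigma,\pi)
= \sum_{\ell\geq 0}d^{\ell}\binom{\pi}{\sigma}\val{\,|\pi|=|\sigma|+\ell\,}
= d^{|\pi|-|\sigma|}P(\sigma,\pi).
\end{equation*}
The main obstacle is essentially nonexistent once the lemma is established; the only thing to check carefully is that the proof of Lemma~\ref{lemma:H^l} really only used the abstract fact that covers in $A^*$ are recorded by adding one element to the position set of an occurrence, a property that holds verbatim for $\sym$. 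Alternatively, the combinatorial multichain proof of Theorem~\ref{thm:powers-of-P} can be adapted by replacing subwords by patterns throughout, using the same bijection between multichains in the boolean algebra on $\o_d\setminus\o_0$ and functions $t\colon\o_d\setminus\o_0\to[d]$.
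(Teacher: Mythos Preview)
Your proposal is correct and follows exactly the approach the paper indicates: the paper simply remarks that either of the two proofs of Theorem~\ref{thm:powers-of-P} can be adapted to $I(\sym)$, and you have carried out the algebraic one (via the permutation analog of Lemma~\ref{lemma:H^l} and $P=e^{H}$) while noting that the combinatorial multichain proof also transfers. Your observation that the only extra point to check is that any subset of positions of $\pi$ flattens to a well-defined pattern is precisely the content of the paper's remark that the proof of Lemma~\ref{lemma:H^l} ``can be modified to apply to the setting of the incidence algebra $I(\sym)$.''
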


\section{A reciprocity theorem}\label{sec:reciprocity}

Let $u=a_1 a_2 \dots a_k$, with $a_i\in A$, and let $v=b_1 b_2\dots
b_n$, with $b_i\in A$.  Let $p=(u,R)$ with $R\subseteq [0,k]$. Recall
that an \emph{occurrence} of $p$ in $v$ is an order-preserving injection
$\varphi: [k]\to [n]$ such that $a_i = b_{\varphi(i)}$ for all $i\in
[k]$, and, for all $i\in R$, $\varphi(i+1) = \varphi(i)+1$, with the
convention that $\varphi(0)=0$ and $\varphi(k+1)=n+1$.  The number of
occurrences of $p$ in $v$ is denoted by $\binom{v}{p}$ or
$\binom{v}{u,R}$.

\begin{lemma}[A generalization of Pascal's formula]\label{lemma:pascal}
  Let $u$ and $v$ be words, and let $a$ and $b$ be letters. Let $k=|ub|$
  and let $R\subseteq [0,k]$. Then we have
  $$\binom{va}{ub,R} =
  \val{k\notin R}\binom{v}{ub,R} 
  + \val{a=b}\binom{v}{u,R\setminus \{k\}}.
  $$
\end{lemma}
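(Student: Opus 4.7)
My plan is to prove the identity combinatorially by partitioning occurrences $\varphi$ of $(ub,R)$ in $va$ according to the value of the final index $\varphi(k)$. There are exactly two cases: either $\varphi(k) < |va|$, so $\varphi$ lies entirely inside $v$, or $\varphi(k) = |va|$, so $\varphi$ uses the appended letter $a$. I aim to show that these two cases produce the two summands on the right, in order.

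For the case $\varphi(k) < |va|$, I would view $\varphi$ as a map $[k] \to [|v|]$. The letter-matching conditions and the equalities $\varphi(i+1) = \varphi(i)+1$ for $i \in R$ with $i < k$ transfer unchanged. The only potentially problematic constraint is the one at $i = k$, because the convention governing $\varphi(k+1)$ depends on the ambient word: it is $|va|+1$ in $va$ but $|v|+1$ in $v$. If $k \in R$, then reading the condition in $va$ forces $\varphi(k) = |va|$, contradicting the present case. Hence this case is empty when $k \in R$, and when $k \notin R$ it contributes exactly $\binom{v}{ub,R}$, which packages as the first summand $\val{k\notin R}\binom{v}{ub,R}$.

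For the case $\varphi(k) = |va|$, matching letters at position $|va|$ forces $b = a$, supplying the factor $\val{a=b}$. Assuming $a = b$, I would set $\varphi' = \varphi|_{[k-1]}$ and argue that $\varphi'$ is an occurrence of $(u, R\setminus\{k\})$ in $v$, and that the correspondence $\varphi \mapsto \varphi'$ is a bijection. Constraints indexed by $i \in R$ with $i < k$ transfer verbatim; in the delicate subcase $i = k-1$, consistency follows from $\varphi'(k) = |v|+1 = |va| = \varphi(k)$ under the respective conventions. The constraint at $i = k$, should it be in $R$, reduces to $|va|+1 = |va|+1$, which is automatic and hence correctly discarded on passing from $R$ to $R\setminus\{k\}$. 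Summing the two contributions yields the claimed identity.

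The main, though modest, obstacle is bookkeeping: the boundary conventions $\varphi(0)=0$ and $\varphi(k+1)=n+1$ shift as one passes between ambient words of different lengths ($va$ versus $v$) and pattern words of different lengths ($ub$ versus $u$). Once each constraint indexed by an element of $R$ is written out explicitly in both settings, the case analysis is routine and the two pieces fit together to reconstruct the full count $\binom{va}{ub,R}$.
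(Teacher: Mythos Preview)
Your proposal is correct and follows essentially the same approach as the paper: both partition the occurrences of $(ub,R)$ in $va$ according to whether the last letter of $ub$ is matched with the last letter of $va$, and identify the two resulting classes with the two summands. Your write-up simply makes explicit the bookkeeping around the conventions $\varphi(0)=0$ and $\varphi(k+1)=n+1$ that the paper's short proof leaves implicit.
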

\begin{proof}
  An occurrence of $(ub,R)$ in $va$ may match the last letter of $ub$
  with the last letter of $va$; the number of such occurrences is
  $\val{a=b}\binom{v}{u,R\setminus \{k\}}$. If $k\in R$ this is the only option: we
  have to match the last letter of $ub$ with the last letter of
  $va$. If, on the other hand, $k\notin R$, then we have additional
  occurrences, namely those that do not involve the last letter of
  $va$, and there are $\binom{v}{ub,R}$ such occurrences.
\end{proof}

\begin{theorem}[Reciprocity]\label{thm:reciprocity}
  For $u$ a word and $R\subseteq [0,|u|]$ let $p = (u,R)$, and let
  $\dual{p} = (u, [0,|u|]\setminus R)$. Then, for any word $w$, we have
  $$
  \binom{w}{p} = \sum_{v\in A^*}\coeff{p}{v}\binom{w}{v},\;
  \text{ where }
  \coeff{p}{v} = (-1)^{|v|-|u|}\binom{v}{\dual{p}}.
  $$
\end{theorem}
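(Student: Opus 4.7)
The plan is to reformulate the claim as a combinatorial identity and prove it by inclusion--exclusion. The incidence-algebra action from the introduction transfers verbatim from $I(\sym)$ to $I(A^*)$: writing $\binom{\cdot}{p} = c\ast P$ and then inverting to $c = \binom{\cdot}{p}\ast P^{-1}$, together with the known formula $P^{-1}(u',v) = (-1)^{|v|-|u'|}\binom{v}{u'}$ (equivalent to \eqref{eq:inverse} and also immediate from Theorem~\ref{thm:P=e^H}), the theorem reduces, after multiplying through by $(-1)^{|v|-|u|}$, to
$$\binom{v}{u,R^c} \;=\; \sum_{u'\in A^*}(-1)^{|u'|-|u|}\binom{u'}{u,R}\binom{v}{u'}$$
for every word $v$, where $R^c := [0,|u|]\setminus R$.

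I would prove this identity combinatorially. A term on the right corresponds to a pair $(\o', \psi)$ with $\o' = \{j_1<\dots<j_{|u'|}\}\subseteq[|v|]$ an occurrence of $u'$ in $v$ and $\psi$ an occurrence of $(u,R)$ in $u'$. Composing them gives an occurrence $\o = \{q_1<\dots<q_k\}$ of $u$ in $v$ via $q_s := j_{\psi(s)}$, and $\o \subseteq \o'$. Adopting the conventions $j_0 := 0$, $j_{|u'|+1} := |v|+1$, $q_0 := 0$ and $q_{k+1} := |v|+1$ (all compatible with $q_s = j_{\psi(s)}$ for $0\le s\le k+1$), the $R$-adjacency condition $\psi(s+1)=\psi(s)+1$ on $\psi$ translates, under this composition, into the condition that for every $s\in R$ the gap $G_s := \{q_s+1,\dots,q_{s+1}-1\}$ contains no element of $\o'$. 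Re-indexing the right-hand side by $\o$ first and writing $\o'\setminus\o = \bigsqcup_{s=0}^{k} T_s$ with $T_s := \o'\cap G_s$, the inner sum factors:
$$\sum_{\substack{\o'\supseteq\o \\ \o'\cap G_s = \emptyset\ \forall s\in R}}(-1)^{|\o'|-|\o|} \;=\; \prod_{s\in R^c}\,\sum_{T_s\subseteq G_s}(-1)^{|T_s|} \;=\; \prod_{s\in R^c}\val{q_{s+1}=q_s+1},$$
using the toggle identity $\sum_{T\subseteq G}(-1)^{|T|} = \val{G=\emptyset}$. The final product is precisely the indicator that $\o$ is an occurrence of $\dual{p} = (u,R^c)$ in $v$, so summing over $\o$ yields $\binom{v}{u,R^c}$, as wanted.

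The main obstacle I anticipate is simply the endpoint bookkeeping: making sure that the boundary conventions baked into the definition of $(u,R)$-occurrence ($\psi(0)=0$ and $\psi(k+1)=|u'|+1$) compose correctly with the conventions $q_0=0$ and $q_{k+1}=|v|+1$ on the $v$-side, so that the adjacency constraints for $s\in\{0,k\}\cap R$ become the correct statements ``$q_1$ is the minimum of $\o'$'' and ``$q_k$ is the maximum of $\o'$''. Once this dictionary is fixed, everything else is the elementary inclusion--exclusion computation above.
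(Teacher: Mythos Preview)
Your argument is correct and takes a genuinely different route from the paper's. Both you and the paper first pass to the equivalent identity
\[
\binom{v}{u,R^c}\;=\;\sum_{u'}(-1)^{|u'|-|u|}\binom{u'}{u,R}\binom{v}{u'}
\]
(the paper phrases this as ``swap the roles of $p$ and $\dual{p}$''; your detour through $P^{-1}$ lands at the same place). From there the methods diverge. The paper proves the identity by induction on $|w|$, peeling off the last letter and invoking the generalized Pascal recursion of Lemma~\ref{lemma:pascal},
\[
\binom{va}{ub,R}=\val{k\notin R}\binom{v}{ub,R}+\val{a=b}\binom{v}{u,R\setminus\{k\}},
\]
and then pushing the alternating sign through a somewhat delicate computation. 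Your proof is instead a direct sign-reversing inclusion--exclusion: you parametrize each term by a nested pair $\o\subseteq\o'\subseteq[|v|]$, note that the $R$-adjacency on $\psi$ is exactly the condition $\o'\cap G_s=\emptyset$ for $s\in R$, and collapse the remaining gaps via $\sum_{T\subseteq G}(-1)^{|T|}=\val{G=\emptyset}$. This is shorter and more conceptual---it makes transparent why the complementary set $R^c$ appears---whereas the paper's induction needs no global parametrization and stays at the level of single-letter recursions. The endpoint bookkeeping you flag is handled correctly: the conventions $\psi(0)=0$, $\psi(k+1)=|u'|+1$ compose with $j_0=0$, $j_{|u'|+1}=|v|+1$ to give exactly $q_0=0$, $q_{k+1}=|v|+1$, so the cases $s\in\{0,k\}\cap R$ become ``$q_1=\min\o'$'' and ``$q_k=\max\o'$'' as you say, and the factorization over $s\in R^c$ goes through without modification.
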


\begin{proof}
  We shall find it convenient to swap the
  role of $p$ and $\dual{p}$. That is, we shall prove the statement
  \begin{equation}\label{eq:reciprocity}
  \sum_{v\in A^*}(-1)^{|v|-|u|}\binom{v}{p}\binom{w}{v} = \binom{w}{\dual{p}}.
  \end{equation}
  The proof will proceed by induction on the length of $w$. If $w=\eps$,
  the empty word, and $p = (u,R)$, then both sides of
  \eqref{eq:reciprocity} are equal to $\val{u=\eps}$. Assume that $w$ is
  nonempty and write $w = za$ with $z\in A^*$ and $a\in A$.  If
  $p=(\eps,R)$ we have two cases to consider, namely $R=\emptyset$ and
  $R=\{0\}$. Clearly, $\dual{(\eps,\emptyset)} = (\eps,\{0\})$ and vice
  versa. Further, for $a\in A$, we have $\binom{za}{\eps,\{0\}} = 0$,
  $\binom{za}{\eps,\emptyset} = 1$, and
  $$
  \sum_{v\in A^*}(-1)^{|v|}\binom{v}{\eps, R}\binom{za}{v}
  $$
  evaluates to $\binom{za}{\eps} = 1$ if $R=\{0\}$; otherwise, that is, if
  $R=\emptyset$, it evaluates to
  $$\sum_{v\in A^*}(-1)^{|v|}\binom{za}{v} = \sum_{k\geq  0}(-1)^k\binom{|za|}{k} = 0.
  $$
  Let $b\in A$, $u\in A^*$, $k=|u|+1$, and $p=(ub,R)$. In the following
  calculation we will use Lemma~\ref{lemma:pascal} and for convenience
  of notation we will refrain from subtracting $\{k\}$ from $R$ when
  writing the last term of the recursion in that lemma. That is, by
  convention $(u,R) = (u, R\cap [0,|u|])$ so that we disregard any part
  of $R$ that is outside the interval $[0,|u|]$. Using induction, we
  have
  \begin{multline*}
    \sum_{v \in A^*}(-1)^{|v|-k}\binom{v}{p}\binom{za}{v}\\
    \begin{aligned}
      &= \sum_{c\in A}\sum_{v\in A^*}(-1)^{1+|v|-k}\binom{vc}{ub,R}\binom{za}{vc} \\
      &= \sum_{c\in A}\sum_{v\in A^*}(-1)^{1+|v|-k}\binom{vc}{ub,R}
        \left(\binom{z}{vc} + \val{a=c}\binom{z}{v}\right) \\
      &= \binom{z}{ub,R^c} + \sum_{v\in A^*}(-1)^{1+|v|-k}\binom{va}{ub,R}\binom{z}{v} \\
      &= \binom{z}{ub,R^c} + \sum_{v\in A^*}(-1)^{1+|v|-k}\left(
        \val{k\notin R}\binom{v}{ub,R} + \val{a=b}\binom{v}{u,R}\right)\binom{z}{v} \\
      &= \binom{z}{ub,R^c} - \val{k\notin R}\binom{z}{ub,R^c} + \val{a=b}\binom{z}{u,R^c}\\
      &= \val{k\notin R^c}\binom{z}{ub,R^c} + \val{a=b}\binom{z}{u,R^c} \\
      &= \binom{za}{\dual{p}},
    \end{aligned}
  \end{multline*}
  which completes the proof.
\end{proof}

\begin{example}
  If $p = (u,[0,|u|])$ then $\dual{p} = (u,\emptyset)$ and $\binom{w}{p}
  = \delta(w,u)$. So, by the Reciprocity Theorem, $\delta(w,u) =
  \sum_{v\in A^*}(-1)^{|v|-|u|}\binom{v}{u}\binom{w}{v}$, giving us yet
  another proof of \eqref{eq:inverse}.
\end{example}

\begin{example}
  In the introduction we remarked that any function $f:\sym\to\QQ$ can
  be expressed as a unique linear combination of functions
  $\binom{\Cdot}{\sigma}$. Similarly, any function $f : A^* \to \QQ$ can
  be written as a unique, but typically infinite, linear combination of
  functions $\binom{\Cdot}{u}$: $I(A^*)$ acts on the right of the
  function space $\QQ^{A^{\!*}}$ by $(f\ast F)(v) = \sum_{u \leq
    v}f(u)F(u,v)$, and thus $f = \sum_u c(u)\binom{\Cdot}{u}$ is
  equivalent to $f = c \ast P$ and, since $P$ is invertible, $c = f \ast
  P^{-1}$. This is called the \emph{Mahler expansion} of $f$ by Pin and
  Silva~\cite{PiSi2014}.  As an example, let $A=\mathbf{2}$,
  and define the parity function $\xor:A^*\to\mathbf{2}$ by $\xor(w) =
  \val{\,\text{$w$ has an odd number of ones}\,}$. Then
  $$
  \xor(w)= \sum_{k\geq 1}(-2)^{k-1}\binom{w}{1^k}.
  $$
  Indeed, assuming that $w\in A^*$ and $\ell=\binom{w}{1}$ we have
  $$\xor(w)
  = \sum_{k=1}^\ell(-2)^{k-1}\binom{\ell}{k}
  = \frac{1}{2}\bigl(1-(-1)^\ell\bigr) =
  \begin{cases}
    1\, \text{ if $\ell$ is odd}, \\
    0\, \text{ if $\ell$ is even}.
  \end{cases}
  $$
  Similarly, it is easy to prove that if we define the two functions
  $\band,\bor:A^*\to\mathbf{2}$ by $\band(w) = \val{\binom{w}{0}=0}$ and
  $\bor(w) = \val{\binom{w}{1}>0}$, then
  $$\band(w) = \sum_{k\geq 0}(-1)^k\binom{w}{0^k}\quad\text{and}\quad
  \bor(w) = \sum_{k\geq 1}(-1)^{k-1}\binom{w}{1^k}.
  $$
\end{example}

\begin{example}
  Let $A = \{a,b,c\}$ and consider $p = (ac,\{1\})$. Note that $\binom{w}{p}$
  is  the number of occurrences of $ac$ as a \emph{factor} in $w$. We shall
  now use the Reciprocity Theorem to find the Mahler expansion of
  $\binom{\Cdot}{p}$.  We have $\dual{p} = \bigl(ac, \{0,2\}\bigr)$ and
  $\binom{w}{p^*} = \val{\,\text{$w = avc$ for some $v\in A^*$}\,}$.
  Thus,
  $$\binom{w}{ac,\{1\}} = \sum_{v\in A^*}(-1)^{|v|}\binom{w}{avc}.
  $$
\end{example}

\begin{corollary}\label{cor:boolean-reciprocity}
  Let $n$ and $k$ be nonnegative integers. Let $R\subseteq [0,k]$ and
  $R^c = [0,k]\setminus R$. Then the Mahler expansion of the generalized
  binomial coefficient
  $
  \binom{n}{k,R} =
  \#\big\{\,\{s_1,s_2,\dots,s_k\}\subseteq [n]: s_{i+1} = s_i + 1\text{ for } i\in R\,\big\}
  $, with $s_0=0$ and $s_{k+1}=n+1$,
  in $I(\NN)$ is
  $$\binom{n}{k,R} = \sum_{\ell\geq 0} (-1)^{\ell-k}\binom{\ell}{k,R^c}\binom{n}{\ell}.
  $$
\end{corollary}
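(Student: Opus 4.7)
The plan is to specialize Theorem~\ref{thm:reciprocity} to the singleton alphabet $A=\{a\}$. First I would observe that under this specialization every word in $A^*$ has the form $a^n$ for some $n\geq 0$, so $A^*$ is naturally identified with $\NN$ via length. Under this identification, for $u=a^k$ and $v=a^n$, the ordinary subword count $\binom{v}{u}$ is exactly the ordinary binomial coefficient $\binom{n}{k}$, since any order-preserving injection $[k]\to[n]$ yields a valid occurrence (the letter-equality condition is vacuous on a singleton alphabet).

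Next I would check that the restricted-subword count $\binom{a^n}{a^k, R}$ agrees with the generalized binomial coefficient $\binom{n}{k,R}$ from the corollary statement. By definition, an occurrence of $(a^k, R)$ in $a^n$ is an order-preserving injection $\varphi: [k]\to[n]$ with $\varphi(i+1)=\varphi(i)+1$ for all $i\in R$, and with the conventions $\varphi(0)=0$ and $\varphi(k+1)=n+1$; setting $s_i=\varphi(i)$ matches the corollary's definition of $\binom{n}{k,R}$ verbatim.

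Finally, applying Theorem~\ref{thm:reciprocity} with $w=a^n$, $u=a^k$, and $p=(a^k,R)$, so that $\dual{p}=(a^k, R^c)$, and reindexing the sum over $v\in A^*$ by the length $\ell=|v|$, I obtain
$$\binom{n}{k,R} \;=\; \binom{a^n}{p} \;=\; \sum_{\ell\geq 0}(-1)^{\ell-k}\binom{a^\ell}{\dual{p}}\binom{a^n}{a^\ell} \;=\; \sum_{\ell\geq 0}(-1)^{\ell-k}\binom{\ell}{k,R^c}\binom{n}{\ell},$$
which is the asserted identity. There is no genuine obstacle here: the corollary is simply the singleton-alphabet instance of Theorem~\ref{thm:reciprocity}, and the only work is the bookkeeping that matches the notation $\binom{n}{k,R}$ with $\binom{a^n}{a^k,R}$ under the identification $A^*\cong\NN$.
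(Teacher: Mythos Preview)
Your proof is correct and follows exactly the same approach as the paper: specialize Theorem~\ref{thm:reciprocity} to the singleton alphabet $A=\{a\}$, identify $\binom{a^n}{a^k,R}$ with $\binom{n}{k,R}$, and read off the identity. You simply fill in more of the bookkeeping than the paper does.
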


\begin{proof}
  Let $A=\{a\}$, and let $u,v\in A^*$. Then $u=a^k$ and $v=a^n$ for some
  $k,n\in\NN$. Let $R\subseteq [0,k]$. Then $\binom{v}{u,R} =
  \binom{n}{k,R}$, and the result follows from
  Theorem~\ref{thm:reciprocity}.
\end{proof}

\begin{example}
  Let $n=4$, $k=2$ and $R=\{1\}$. By
  Corollary~\ref{cor:boolean-reciprocity} we have\smallskip
  $$\binom{4}{2,\{1\}} =
  \binom{2}{2,\{0,2\}}\binom{4}{2}
  - \binom{3}{2,\{0,2\}}\binom{4}{3}
  + \binom{4}{2,\{0,2\}}\binom{4}{4}.\smallskip
  $$
  Here, the left-hand side is
  $\#\{\{1,2\},\{2,3\},\{3,4\}\} = 3$, and the right-hand side is
  $\#\{\{1,2\}\}\cdot 6 - \#\{\{1,3\}\}\cdot 4 + \#\{\{1,4\}\}\cdot 1 =
  6-4+1 = 3$.
\end{example}

\bibliographystyle{plain}
\bibliography{ref}

\end{document}